\documentclass[10pt,reqno]{amsart}

\usepackage[cp1251]{inputenc}

\usepackage{amssymb
,epsfig
}
\usepackage{graphicx}
%\usepackage{showlabels}%%%%%%%%%%Not for final versions

%%%%%%%%%%%%%%%%%%%%%%%%%%%%%%
%\input epsfx

\textwidth140truemm

\textheight210truemm

\newtheorem{dfn}{Definition}
\newtheorem{thm}{Theorem}
\newtheorem{prp}{Proposition}
\newtheorem{lemma}{Lemma}

\def\R{{\mathbb R}}
%%%%%%%%%%%%%% Expectation

%%%%%%%%%%%%%% Probability

\def\P{{\mathbb P}}
%%%%%%%%%%%%%% Integers

%%%%%%%%%%%%%% Natural numbers

             % indicator

%\def\S{{\mathcal S}}

\def\<{\langle}
\def\>{\rangle}

\DeclareMathOperator{\rank}{rank}

\begin{document}

\title{Equilibria of point charges on convex curves}

\author{G.Khimshiashvili*, G.Panina$^\dag$, D.Siersma$^\ddag$}

\address{*Ilia State University, Tbilisi, Georgia,
e-mail: khimsh@rmi.acnet.ge.
 $^\dag$ Institute for Informatics and Automation,
Saint-Petersburg State University, St. Petersburg, Russia,
e-mail:gaiane-panina@rambler.ru. $^\ddag$ University of Utrecht,
Utrecht, The Netherlands, e-mail: D.Siersma@uu.nl.}

\keywords{point charge, Coulomb potential, equilibrium,
critical point, configuration space,
concurrent normals, evolute }

\subjclass{ 70C20, 52A10, 53A04}

\begin{abstract}
We study the equilibrium positions of three points on a convex curve
under influence of the Coulomb potential. We identify these
positions as \textit{orthotripods}, three points on the curve having
concurrent normals. This relates the equilibrium positions to the
caustic (evolute) of the curve. The concurrent normals can only meet
in the \textit{core} of the caustic, which is contained in the
interior of the caustic. Moreover, we give a geometric condition for
three points in equilibrium with positive charges only. For the
ellipse we show that the space of orthotripods is homeomorphic to a
2-dimensional bounded cylinder.

\end{abstract}

%\date{\today}  %%%%%%%%%%%%%States the date; remove for arxiv version%%%%%%%%%%%%%
\maketitle

%%%%%%%%%%%%%%%%%%%%%%%%%%%%%%%%%%%%%%%%%%%%%%%%%%%%%%%%%%%%%%%%%%%%%%%%%%%%%%%%%%%%%%%%%%%%%%%%%%%%%%%%%%%
\section{Introduction}

The electrostatic (Coulomb) potential of a system of point charges
confined to a certain domain in Euclidean plane has been studied in
a number of papers in connection with various problems and models
of mathematical physics \cite{agheriva}, \cite{exner}, \cite{ross}.
Analogous problems for the gravitational potential of point masses
have also been discussed in the literature \cite{agheriva}, \cite{exner},
\cite{ross}.

Several issues considered in the mentioned papers were connected
with studying equilibria configurations of a system of point charges.
As usual, a configuration $P$ of several points on a curve $X$ is called
an equilibrium for a system of charges $Q$ if the system is `at rest' at this configuration.
In particular, equilibria of three point charges on an ellipse have
been discussed in some detail in \cite{agheriva}.

In this paper, we deal with a specific problem related to
our previous research of charged polygonal linkages \cite{kps}. Namely,
given a collection of $n\geq 3$ points $P$ on a given (fixed) closed curve $X$,
we wish to investigate if this collection of points is an equilibrium of $E_q$
for a certain system of point charges $q$. If this is possible,
any such collection of charges $q$ will be called
 {\it balancing charges} for $P$. We also want to investigate if this can be done with positive charges only. An analogous definition is meaningful and
interesting for several other potentials of point interactions, e.g., for gravitational
potential or logarithmic potential in the plane and more general central forces.

In the sequel we deal mostly with the (electrostatic) Coulomb
potential $E_q$ and the $E_q$-equilibrium problem for triples of
points on a closed curve in the plane. We reveal that this problem
is closely related to certain geometric issues concerned with the
concept of {\it caustic} (evolute) of a plane curve and present
several results along these lines. In particular, we give a
geometric criterion for balancing three points on an arbitrary
closed curve (Theorem \ref{t:3pod}) and present a condition for
balancing with positive charges (Proposition  \ref{poscharges}). We
give a description of the set of triples which can be balanced  on
an ellipse, and also a description of the set of triples on an
ellipse which can be balanced  by positive charges. Some related
results and arising research perspectives are discussed in the last
section of the paper.

An important observation is that Coulomb forces can be replaced by
Hooke forces produced by (either compressed or extended) connecting
springs, or by any other \textit{central forces}.

%%%%%%%%%%%%%%%%%%%%%%%%%%%%%%%%%%%%%%%%%%%%%%%%%%%%%%%%%%%%%%%%%%%%%%%%%%%%%%%%%%%%%%%%%%%%%%%%%%%%%%%%%%%%%%%
\section{Electrostatic equilibrium of points on closed curve}

\subsection{Condition for triples in equilibrium}
We consider a collection $P_1, \cdots ,P_n$ of  distinct points  on
a smooth curve $X$ in the plane, together with charges $q_1,\cdots,q_n$.
We do not require in this section that the curve is convex.
The Coulomb potential of these point charges is given by
$$   E_q =  - \sum_{i < j}\frac{q_iq_j}{d_{ij}},$$  where
$p_i=\overrightarrow{OP_i}$, and $d_{ij} = || p_i-p_j || . $ The
Coulomb forces between these points are given by

$$F_{ji} = \frac{q_iq_j}{d_{ij}^3}(p_i-p_j)
$$

Let $F_i = \sum_{j \ne i} F_{ji}$ be the resultant of these forces
at $P_i$. Let $T_i$ be the tangent vector to the curve $X$ at the
point $P_i$.

\begin{dfn} \label{d:station}
 A collection of points on a curve  $X$  charged by  $q=(q_1,\cdots, q_n)\neq (0,...,0)$ is called an $E_q$-equilibrium (or is $E_q$-balanced) if, at every point $P_i$
  of the collection, the resultant of the forces is orthogonal to $T_i$:

  $$ \langle F_i, T_i\rangle =
\sum_{j \ne i} \frac{q_iq_j}{d_{ij}^3}\langle p_i-p_j, T_i\rangle =
0 \; \; \forall i. $$
  In this situation we say that the charges $q$ are balancing for $P_1, \cdots ,P_n$.
\end{dfn}
Notice that $E_q$-equilibria correspond to the critical/stationary points of the potential $E_q$.

Whenever $q_i=0$ for one of the charges, the system reduces to a
system with one point less (the removed point can be on an arbitrary
place). If none of the charges is zero then the equilibrium
condition implies a system of linear equations for the values of
balancing charges.

In the special case of two points we have two equations.
Non-zero solutions $q=(q_1,q_2)$ only occur if $P_1P_2$ is
orthogonal to both $T_1$ and $T_2$. So $P_1P_2$ is a double normal
of the curve.

The main situation of our study is three points on a curve. In this
case we have the matrix equation:

\[
\begin{pmatrix}

 0 & a_{12}     & a_{13} \\
a_{21} &  0 & a_{23} \\
a_{31} & a_{32} & 0
    \end{pmatrix}
\begin{pmatrix}
q_1\\ q_2 \\ q_3
\end{pmatrix}
= 0 \; , \; \mbox{where} \; \; a_{ij}= \frac{\langle p_i-p_j ,
T_i\rangle}{d_{ij}^3} \; .
\]

If the rank of this system is 3 then its solution is only
the triple $(0,0,0)$ so a genuine (non-trivial)
equilibrium is impossible. Thus non-trivial stationary charges
may only exist if the rank of this system does not exceed two.

\begin{dfn}
Points $P_1, P_2, P_3$ satisfy  the corank 1 condition if the rank
of the matrix
$$(a_{ij} )= \Big(\frac{\langle p_i-p_j, T_i\rangle}{d_{ij}^3}\Big)$$ is less than or equal to $ 2$.

\end{dfn}

If the rank of the matrix
$(a_{ij} )$ is equal to 2, then the matrix equation has a
one-dimensional solution space and therefore defines a unique point
$[q_1:q_2:q_3]$ in $\mathbb{P}^2$. In case where the rank of this
matrix is 1 it follows that the points $P_1, P_2, P_3$ are collinear
and at least one of $P_iP_j$ is a double normal. This case cannot
occur if the curve is convex.

\begin{prp}\label{p:rank2}
 Points $P_1, P_2, P_3$ satisfy the corank 1
condition if and only if the three normals at these points are
concurrent, that is, have a common point.
\end{prp}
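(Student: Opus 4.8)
The plan is to reduce the corank-1 condition --- which is nothing but the vanishing of $\det M$ for the $3\times 3$ matrix $M=(a_{ij})$, with the convention $a_{ii}=0$ --- to the vanishing of a determinant with a transparent geometric meaning. Write $h_i(Q):=\langle Q-P_i,T_i\rangle$ for the affine function whose zero set is the normal line $N_i$; then $a_{ij}=-h_i(P_j)/d_{ij}^3$ and $h_i(P_i)=0$. Since $M$ has zero diagonal, $\det M=a_{12}a_{23}a_{31}+a_{13}a_{21}a_{32}$, and because $d_{ij}=d_{ji}$ the two products of denominators coincide, so
\[
\det M=-\frac{h_1(P_2)h_2(P_3)h_3(P_1)+h_1(P_3)h_2(P_1)h_3(P_2)}{(d_{12}d_{23}d_{31})^{3}}.
\]

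The key observation is that the numerator above is precisely $\det H$ for the full $3\times 3$ matrix $H:=(h_i(P_j))_{i,j=1,2,3}$: in the Leibniz expansion of $\det H$ every permutation with a fixed point contributes $0$ because $h_k(P_k)=0$, and the only fixed-point-free permutations of $\{1,2,3\}$ are the two $3$-cycles, both even. Hence $\det M=0$ if and only if $\det H=0$. Next, writing $h_i(Q)=\langle T_i,Q\rangle-c_i$ with $c_i:=\langle P_i,T_i\rangle$, one reads off a factorization $H=B\,\widetilde P$, where the $i$-th row of $B$ is the triple of homogeneous line-coordinates $(T_i^{(1)},T_i^{(2)},-c_i)$ of the normal $N_i$, and the $j$-th column of $\widetilde P$ is the triple of homogeneous point-coordinates $(P_j^{(1)},P_j^{(2)},1)$ of $P_j$. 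Therefore $\det H=\det B\cdot\det\widetilde P$.

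It then remains to interpret the two factors. One has $\det\widetilde P=0$ exactly when $P_1,P_2,P_3$ are collinear, which never happens on a convex curve, so $\det\widetilde P\neq0$; and $\det B=0$ is the classical projective incidence criterion asserting that the three lines $N_1,N_2,N_3$ pass through a common point of $\mathbb{P}^2$ (the rows of $B$ are the dual points of these lines). On a convex curve distinct points have distinct tangent directions, hence distinct normal directions, so the three normals are pairwise non-parallel and a common point at infinity is excluded; thus $\det B=0$ precisely when $N_1,N_2,N_3$ are concurrent in the affine plane. Chaining the equivalences gives $\det M=0\iff\det H=0\iff\det B=0\iff$ the three normals have a common point.

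The step that needs the most care is the passage to the projective statement together with its degenerate cases: one must verify that on a convex curve no two normals coincide or are parallel, so that ``concurrent in $\mathbb{P}^2$'' coincides with ``concurrent in $\R^2$'', and one should make explicit where the non-collinearity of $P_1,P_2,P_3$ --- automatic for convex $X$ --- actually enters, since collinearity of the three points would force $\det M=0$ independently of the normals. The sign bookkeeping in the identity relating $\det M$ and $\det H$ is routine.
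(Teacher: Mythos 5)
Your argument is correct in substance but follows a genuinely different route from the paper. The paper drops the feet $Q_i$ of the three normals onto the opposite sides of the triangle $P_1P_2P_3$, rewrites each $a_{ij}$ as a signed $\sin/d^2$ of the angles the normal makes with the adjacent sides, reduces the corank condition to $\sin\alpha_1\sin\alpha_2\sin\alpha_3=\sin\beta_1\sin\beta_2\sin\beta_3$, and concludes by Ceva's theorem. You instead identify $\det(a_{ij})$, up to the common denominator $(d_{12}d_{23}d_{31})^{3}$, with $\det\bigl(h_i(P_j)\bigr)$ and factor the latter as $\det B\cdot\det\widetilde P$, the product of the classical ``three lines concurrent'' determinant and the ``three points collinear'' determinant. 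Your route is more linear-algebraic; it makes explicit both where non-collinearity of the $P_i$ enters and why the statement is insensitive to the choice of central force (the denominators cancel), which the paper only remarks on afterwards. The paper's Ceva argument is more elementary and classical, but likewise needs the triangle $P_1P_2P_3$ to be nondegenerate and the configuration to be generic enough for the signed-ratio bookkeeping.

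One assertion in your degenerate-case discussion is false: it is not true that distinct points of a convex curve have distinct tangent (hence normal) directions. On a strictly convex closed curve each unoriented direction occurs as a normal direction at exactly \emph{two} points (e.g.\ centrally symmetric points of an ellipse), so two of your three normals may well be parallel. The conclusion you need survives, but by a different argument: since each direction occurs at most twice, the three normals cannot be mutually parallel; if exactly two are parallel they either coincide (a double normal, in which case the three lines do share a common affine point and $\det B=0$, the two rows of $B$ being proportional) or are distinct parallel lines (in which case subtracting the two rows with proportional first two entries shows $\det B\neq 0$, and indeed there is no common point, affine or at infinity). With that repair, concurrence in $\mathbb{P}^2$ and concurrence in $\mathbb{R}^2$ agree and your chain of equivalences closes.
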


\vspace{-0.5cm}

\begin{proof}
\begin{figure}[htbp]
    \centering
        \includegraphics[width=10 cm]{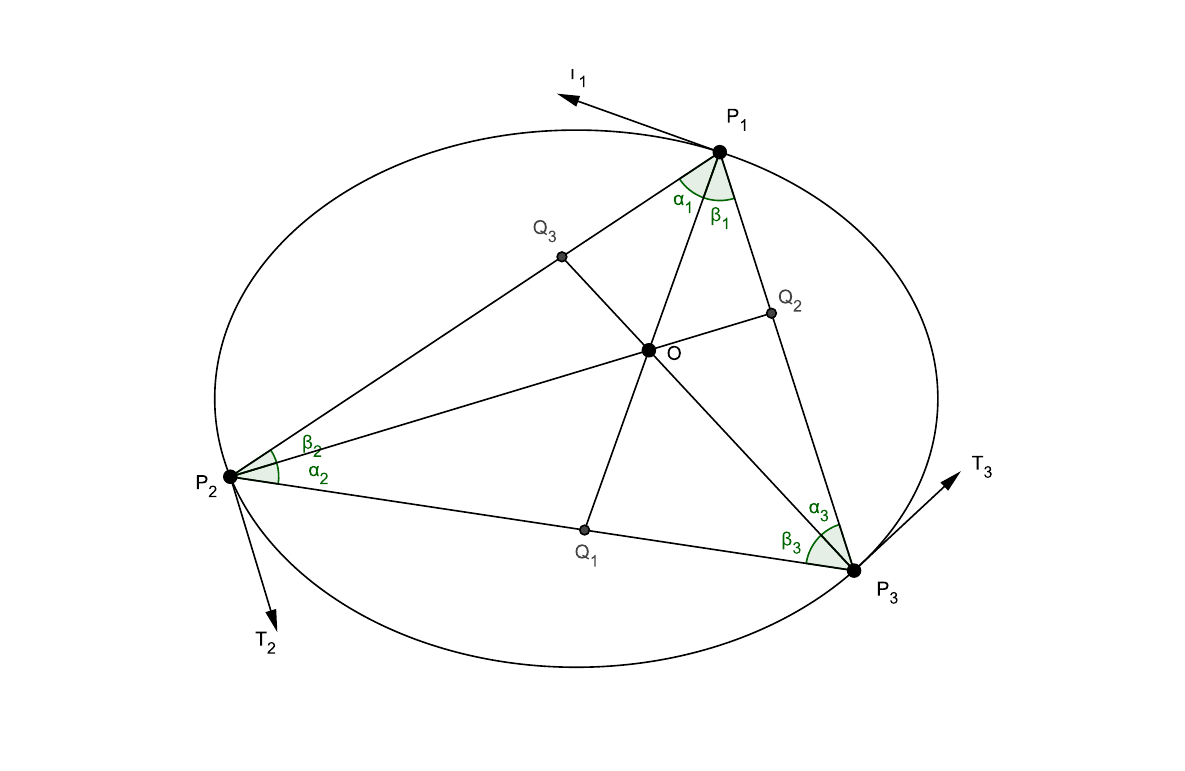}

            \vspace{-1cm}
    \caption{Orthotripods and Ceva configuration}
    \label{fig:ceva}
\end{figure}

Introducing the
angles $\alpha_i = \angle{P_{i+1},P_iQ_i}$, $\beta_i = \angle{Q_iP_iP_{i-1}}$ (with mod 3 convention) and unit tangent vectors $T_i$, we notice that
$$a_{12} = \frac{ \langle p_1 - p_2 , T_1\rangle }{d_{12}^3}= - \frac{ \sin \alpha_1}{d_{12}^2}, \; \; \; \; \; \;
a_{13} = \frac{\langle p_1 - p_3  , T_1 \rangle}{d_{13}^3} =  \frac{\sin \beta_1 }{d_{13}^2} , \; \; \; \mbox{etc.}$$

We 
conclude that $\rank (a_{ij})$ does not exceed two if and only if
$$a_{12}a_{23}a_{31}
+ a_{13}a_{21}a_{32} = 0.$$
The latter condition is equivalent to
$$\sin \alpha_1 \sin \alpha_2
\sin \alpha_3 = \sin \beta_1 \sin \beta_2 \sin \beta_3.$$ By Ceva
theorem the lines $P_1Q_1, P_2,Q_2, P_3Q_3$ are concurrent if and
only if the signed lengths of segments satisfy the following
relation.
$$ \frac{P_2Q_1}{Q_1P_3} \cdot \frac{P_3Q_2}{Q_2P_1} \cdot \frac{P_1Q_3}{Q_3P_2} = 1,$$
which is in our case  equivalent to
$$\frac{\sin \alpha_1}{\sin \beta_1} \cdot \frac{\sin \alpha_2}{\sin \beta_2} \cdot \frac{\sin \alpha_3}{\sin \beta_3} = 1.$$
\end{proof}

\bigskip

\textbf{Remark.} The same proposition holds for any  \textit{central
forces}, that is, for all the forces that are given by: 

$$F_{ji} = \frac{q_iq_j}{f_{ij}}(p_i-p_j) \; \;
\mbox{where} \; \; f_{ij} = f_{ji} \; \mbox{depends only on} \; \;  d_{ij}.  $$

Examples of central forces are Coulomb forces, Hooke forces, and
logarithmic forces. The Ceva equation does not depend on the
concrete central force, since the denominators of $a_{ij}$ cancel in
the condition of Proposition \ref{p:rank2}.
 We can as well work with $a_{ij}= \langle p_i-p_j , T_i\rangle$. Proposition \ref{poscharges} below also holds in this more general case.
\bigskip

\textbf{Remark.} The claim of Proposition \ref{p:rank2} involves
only  the points $P_i$ and the tangent vectors $T_i$ and no other
data of the curve.

\bigskip

\begin{dfn} \label{d:ortho3pod}
An orthotripod on a smooth closed curve $X$ is defined as an
\textbf{unordered triple} of distinct points such that the normals
to $X$ at these points are concurrent. The common point of these
three normals is called the orthotricentre.
\end{dfn}

Notice an analogy of this result with a well-known description
of three forces in equilibrium.

\bigskip

\textbf{Remark.}
The name ''tripod'' was used  by S. Tabachnikov in \cite{tabach}  for three concurrent normals to the curve making angles of 120 degrees (Steiner property).\\
Orthotripods occur also in \cite{bramourr} and \cite{Imm2}, where
the authors give a necessary condition for immobilization  of convex
curves by three points. In those papers they are called normal
triples.

\bigskip

\begin{thm}\label{t:3pod}
Given 3 points on a curve $X$,  there exist charges $(q_1:q_2:q_3)$ such the points are in $E_q$-equilibrium if and only if they form an orthotripod on $X$.
\end{thm}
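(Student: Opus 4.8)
The plan is to deduce the theorem from Proposition \ref{p:rank2} together with the linear-algebraic form of the equilibrium condition isolated above. The first step is to record that, by Definition \ref{d:station}, a triple $P_1,P_2,P_3$ carrying a nonzero charge vector $q=(q_1,q_2,q_3)$ is in $E_q$-equilibrium precisely when
$$\langle F_i,T_i\rangle = q_i\!\sum_{j\neq i} a_{ij}q_j = q_i\,(Mq)_i = 0,\qquad i=1,2,3,$$
where $M=(a_{ij})$ is the matrix considered above. In particular, any nonzero $q$ with $Mq=0$ balances the triple.

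For the direction ``orthotripod $\Rightarrow$ balanceable'': if the normals at $P_1,P_2,P_3$ are concurrent, then by Proposition \ref{p:rank2} the points satisfy the corank $1$ condition, i.e. $\rank M\le 2$; hence $\ker M\neq 0$, and any nonzero $q\in\ker M$ provides charges $(q_1:q_2:q_3)$ realizing an $E_q$-equilibrium. (When $\rank M=2$ this $q$ is unique up to scale, recovering the point of $\mathbb{P}^2$ discussed before Proposition \ref{p:rank2}.)

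For the converse, suppose a nonzero $q$ balances $P_1,P_2,P_3$, so that $q_i(Mq)_i=0$ for every $i$. If all $q_i\neq 0$, then $Mq=0$, hence $\rank M\le 2$, and Proposition \ref{p:rank2} yields concurrency of the three normals, i.e. an orthotripod. The one place that needs care --- and the main, though mild, obstacle --- is the case in which some $q_i$ vanishes, since then the configuration reduces to a system with fewer points and one must check it is still covered. Excluding the genuinely degenerate case where two charges vanish (not a bona fide three-point configuration), one is left, say, with $q_3=0$ and $q_1,q_2\neq 0$; the equations then force $a_{12}=a_{21}=0$, so $\det M=0$ and again $\rank M\le 2$, whence Proposition \ref{p:rank2} applies --- geometrically, $P_1P_2$ is a double normal along which the normals at $P_1$ and $P_2$ coincide. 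Combining the two directions and invoking Definition \ref{d:ortho3pod} completes the proof.
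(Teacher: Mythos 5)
Your proof is correct and follows the same route as the paper, which simply states that the theorem ``follows from Proposition \ref{p:rank2}''. You supply the linear-algebraic details the paper leaves implicit, including the careful treatment of a vanishing charge (where $P_iP_j$ becomes a double normal and the rank still drops), which is a welcome refinement of the same argument rather than a different approach.
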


\begin{proof}
Follows from Proposition \ref{p:rank2}.
\end{proof}

In such a case the values of balancing charges can be explicitly calculated (cf. Section \ref{ss:computingcharges}).
For the moment we search for positive balancing charges.

\begin{prp}\label{poscharges}
 If $P_1, P_2, P_3$ satisfy the corank 1 condition,
then there exist positive balancing charges if and only if the three
normals at points $P_i$ enter the interior of triangle $\triangle P_1P_2P_3$.
\end{prp}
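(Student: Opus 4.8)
The plan is to read the sign pattern of the balancing charges straight off the corank~$1$ matrix, and then to translate that pattern into a statement about where the three normals sit relative to $\triangle P_1P_2P_3$. First I would assume $\rank(a_{ij})=2$, so that the space of balancing charges is one-dimensional; the remaining possibility $\rank(a_{ij})\le 1$ is the degenerate collinear case already noted (some $P_iP_j$ a double normal) and is handled separately or excluded by hypothesis. Writing the equilibrium system row by row, the $i$-th equation is $a_{i,i+1}q_{i+1}+a_{i,i-1}q_{i-1}=0$ with indices mod~$3$, so in the nondegenerate situation where all $a_{i,i\pm1}\neq 0$ a nonzero balancing vector satisfies $q_{i+1}/q_{i-1}=-a_{i,i-1}/a_{i,i+1}$ for $i=1,2,3$. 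These three ratios have product $1$, which is exactly the Ceva identity $a_{12}a_{23}a_{31}+a_{13}a_{21}a_{32}=0$ used in the proof of Proposition~\ref{p:rank2}; hence the three sign conditions below are mutually compatible, and an even number of the ratios is negative.

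Next I would observe that positive balancing charges exist if and only if all three ratios $q_{i+1}/q_{i-1}$ are positive: if so, then $q_1,q_2,q_3$ share a sign and can be rescaled to be positive, while conversely $q_i>0$ for all $i$ makes every ratio positive. By the formula for the ratios this is equivalent to the requirement that $a_{i,i-1}$ and $a_{i,i+1}$ have opposite signs for every $i$. Now for the geometric translation: since $T_i$ is perpendicular to the normal at $P_i$, that normal is the line $\{x:\langle x-p_i,T_i\rangle=0\}$, so the sign of $a_{i,j}=\langle p_i-p_j,T_i\rangle/d_{ij}^3$ records on which side of the normal at $P_i$ the point $P_j$ lies. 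Hence the sign condition says exactly that $P_{i-1}$ and $P_{i+1}$ lie on opposite sides of the normal at $P_i$, and since $P_i$ is a vertex of $\triangle P_1P_2P_3$, the normal line through $P_i$ meets the interior of the triangle precisely when it separates the other two vertices. Running this over $i=1,2,3$ gives the criterion of the proposition; the compatibility remark of the first step also shows that either exactly one of the three normals enters the triangle, or all three do.

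I do not expect a serious obstacle, since the linear algebra is elementary once the system is written out row by row. What needs a little care is the small plane-geometry lemma that a line through a vertex of a triangle meets its interior if and only if it separates the two remaining vertices, together with the bookkeeping that rules out strictly positive solutions in the genuinely degenerate configurations (some $a_{ij}=0$, or $\rank(a_{ij})\le 1$), so that the statement remains correct there.
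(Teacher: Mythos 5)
Your argument is essentially the paper's own proof, just written out in more detail: both read the condition "positive balancing charges exist" as the sign pattern $\operatorname{sign} a_{i,i+1}=-\operatorname{sign} a_{i,i-1}$ coming from the row equations, and both translate the sign of $a_{ij}=\langle p_i-p_j,T_i\rangle/d_{ij}^3$ into which side of the normal at $P_i$ the point $P_j$ lies on, hence into the normal separating the other two vertices. The extra care you take with the converse direction, the degenerate cases, and the parity observation (one or all three normals enter the triangle) is correct and goes slightly beyond the paper's terse one-directional write-up, but the approach is the same.
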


\begin{proof}
 For a solution $(q_1,q_2,q_3)$ with all $q_i > 0$, we have
\begin{center}
 $\mbox{sign} \; a_{12} = - \mbox{sign} \; a_{13}$ \\
$\mbox{sign} \; a_{21} = - \mbox{sign} \; a_{23}$ \\
$\mbox{sign} \; a_{31} = - \mbox{sign} \; a_{32}$. \\
\end{center}

\begin{figure}[htbp]
\vspace{-1cm}
          \includegraphics[width=14cm]{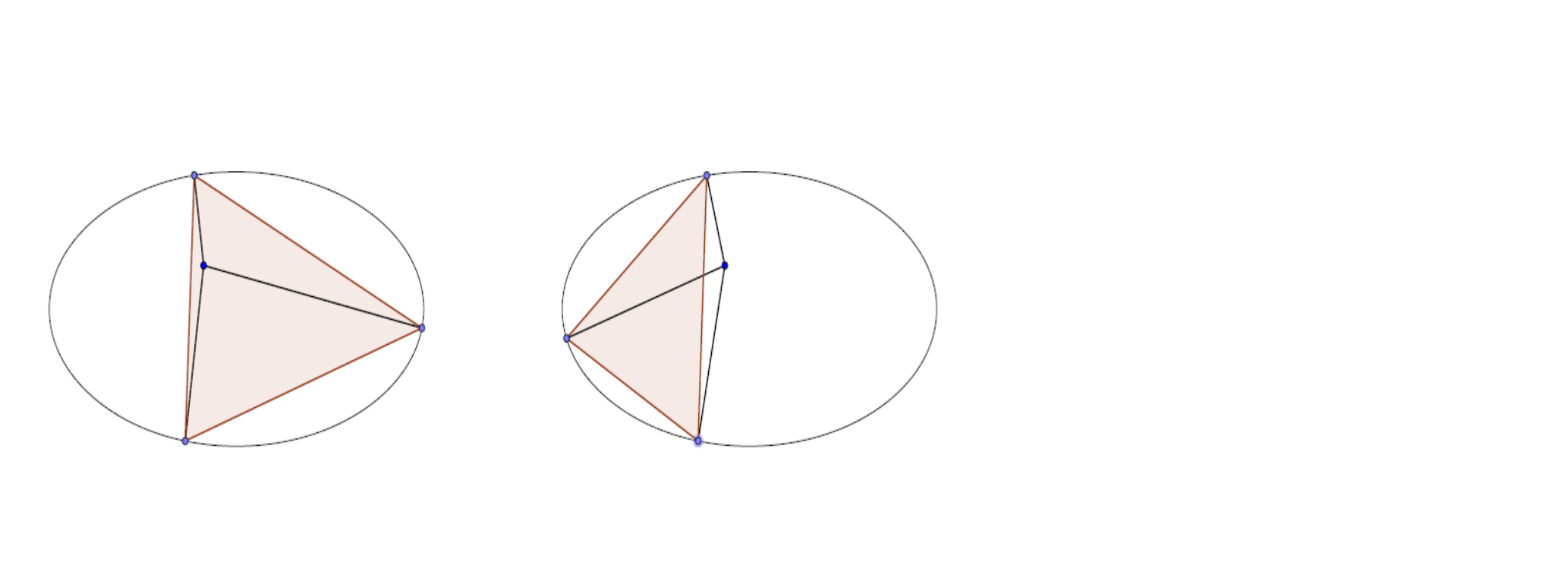}
		
		\vspace{-1cm}		
    \caption{Orthotripods in equilibrium, with positive charges (left) and different signs (right)}
    \label{fig:TwoO3ples}
\end{figure}

Since (up to a positive factor) $a_{12} =  \langle p_1 - p_2 ,
T_1\rangle$ and $a_{13} =  \langle p_1 - p_3  , T_1\rangle$ it
follows that the edges $P_1P_2$ and $P_1P_3$ are on different sides
of the normal at $P_1$ .
\end{proof}

%%%%%%%%%%%%%%%%%%%%%%%%%%%%%%%%%%%%%%%%%%%%%%%%%%%%%%%%%%%%%%%%%%%%%%%%%%%%%%%%%%%%%%%%
\section{Caustics}

The above results indicate an interesting connection with some classical
issues of differential geometry which we outline below. To this end we will
need some definitions and auxiliary results to be used in the sequel.

\begin{dfn} Caustic $C(X)$ of a regular closed curve $X$ is defined as the set of curvature
centers at all points of $X$ (also known as evolute or focal set).
\end{dfn}

 It is known that caustic may be equivalently defined as the envelope of normals to $X$
or the set of singular points of the wavefront of parallel curves
(on a given distance of $X$)  \cite{brgi}. For a generic curve $X$,
its caustic is a piecewise smooth curve with cusps and normal
crossings as singularities (see, e.g. \cite{avg}). Notice also that
all parallel curves  have the same caustic as $X$. They share with
$X$ all properties, which can extracted from the caustic.

\begin{figure}[htbp]
    \centering
        \includegraphics[width=11 cm]{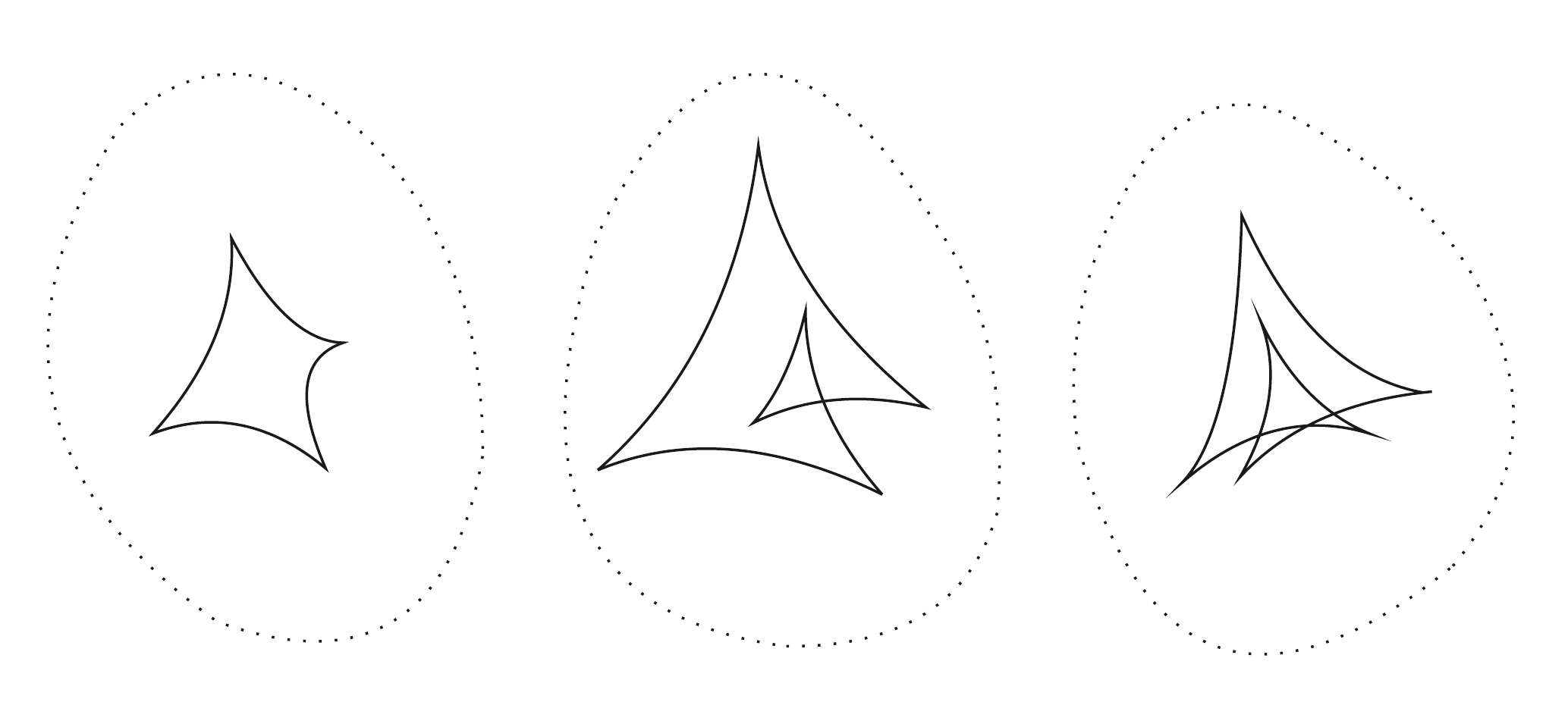}

    \caption{Some curves and their caustics}
    \label{fig:convexcause3}
\end{figure}

Our next aim is to describe in some detail the relation between
caustics and orthotripods. In order to  explore further aspects of
this connection we need some more definitions and notation. We
assume that the curve $X$ is parameterized by its arclength and
write $X(s)$. Denote by $T(s)$ the unit tangent vector and by $N(s)$
the unit normal vector such that the pair $ T(s), N(s) $ is
positively oriented. Let the function $D: \R^2 \times X \rightarrow
\R$ be given by
$$D(x,y,s) = | (x,y) - X(s)|^2. $$

The critical set  $M = \{(x,y,s): \partial_x D = \partial_y D =0\}$
 of the mapping $D$ is
generically smooth and the projection $\Pi: M \rightarrow \R^2$ is a mapping
between smooth two-dimensional manifolds. The image of the critical set $M$
is exactly the caustic $C(X)$ defined above.

For a point $Q\in \R^2$, a pair $(Q,P)$  belongs to the preimage
$\Pi^{-1}(Q)$  iff  the normal to the curve $X$ through $P$ contains
$Q$. Thus the number of points in the preimage of $Q$ is equal to
the number of normals to the curve $X$ emerging  from $Q$. This
number is even if $Q$ does not belong to caustic. Notice that if we
count the preimages with signs then we get the degree of $\Pi$ which
vanishes.

For further use we mention a few other properties of $\Pi$. In particular, the map
$\Pi$ at a singular point $(Q,P)$ is a fold map if $\kappa'(P) \neq 0$, and it is a cusp
map if $\kappa'(P)=0$ and $\kappa'(P)$ has a simple zero at $P$, where $\kappa(P)$ is the curvature at the point $P$.\\

In the rest of this section we assume that $X$ is a convex curve and that $\kappa(P)
\neq 0$ (no flat points). Then the caustic $C(X)$ divides the plane
in closed domains. For each of the points $Q$ of the plane let us
denote by $n(Q)$ the number of normals to the curve $X$  emerging
from  $Q$. This number is constant on each of the (interiors of the)
domains. Points from the non-compact domain have only (exactly) two
normals to $X$.

\begin{dfn}
The union of the compact domains is called the interior region of
the caustic. The  core of the caustic is defined as the union of the
closed domains where the points have at least four emerging normals.

\end{dfn}
The co-orientation of the caustic along the fold-lines (in the
direction of two extra normals) together with the orientation of the
plane $\R^2$ yield an orientation of the caustic. Let for any point
$Q$ in the complement of the caustic  $i(Q)$ be the degree of $Q$
with respect to the caustic.

\begin{lemma}\label{core}In the above notation, we have:
\begin{enumerate}
    \item $n(Q) = 2 i(Q) + 2 $.
    \item The core of the caustic is equal to the closure of the set of points $Q$ with $i(Q) \ne 0$.
\end{enumerate}
\end{lemma}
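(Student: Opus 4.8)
The plan is to understand the local behaviour of the map $\Pi: M \to \R^2$ along the caustic and then run a degree-theoretic (winding number) argument. For part (1), I would start from the observation already recorded in the excerpt: $n(Q)$ equals the number of points in $\Pi^{-1}(Q)$, the degree of $\Pi$ is zero, and as $Q$ crosses a fold-line of the caustic the fibre $\Pi^{-1}(Q)$ gains or loses exactly two points, with the co-orientation chosen so that the two new normals appear on the side the caustic is co-oriented towards. Thus both $n(Q)$ and $i(Q)$ jump by exactly $2$ and by exactly $1$, respectively, and in the same direction, each time a generic fold arc is crossed; moreover the jumps are consistent across cusps since a cusp is a limit of two fold arcs meeting with compatible co-orientations. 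Since on the non-compact (unbounded) domain we have $n(Q)=2$ and $i(Q)=0$, the two piecewise-constant functions $n(Q)$ and $2i(Q)+2$ agree on the unbounded component and have the same increments across every wall of the caustic; hence they agree everywhere on the complement of the caustic. I would phrase this carefully as: $Q \mapsto n(Q) - 2i(Q) - 2$ is locally constant on $\R^2 \setminus C(X)$, vanishes on the unbounded domain, and is unchanged when crossing any regular point of the caustic, so it vanishes identically; the exceptional (cusp, self-intersection) points of the caustic form a set of codimension $\geq 1$ in $C(X)$ and do not disconnect the complement further, so they cause no trouble.

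For part (2), combine (1) with the definition of the core. By definition the core is the closure of the set of $Q$ with $n(Q)\geq 4$. By part (1), $n(Q)\geq 4$ is equivalent to $2i(Q)+2\geq 4$, i.e. $i(Q)\geq 1$. Since the degree $i(Q)$ with respect to the (co-oriented) caustic is an integer that is $0$ on the unbounded component and can only increase as one moves inward across fold arcs in the co-orientation direction — here convexity and $\kappa\neq 0$ are used: the caustic of a convex curve without flat points is co-oriented consistently ``inward'', so crossing a wall toward the interior always adds normals and never subtracts — one never has $i(Q)<0$. Hence $\{Q: i(Q)\neq 0\} = \{Q: i(Q)\geq 1\} = \{Q: n(Q)\geq 4\}$, and taking closures gives the claim. (If one prefers not to invoke the sign of $i$, note that $i(Q)=0$ already forces $n(Q)=2<4$ by part (1), so $Q$ is not in the open set defining the core; and $i(Q)\neq 0$ forces $n(Q)=2i(Q)+2$, and since $n(Q)\geq 2$ and $n(Q)\neq 2$ when $i(Q)\neq 0$ we get $n(Q)\geq 4$, so $Q$ is in that open set. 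Closures then match.)

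The main obstacle I anticipate is making the crossing-increment bookkeeping at the singular points of the caustic fully rigorous rather than hand-wavy: one must check that the co-orientation of the caustic extends consistently across a cusp (so that the two fold branches meeting at a cusp carry compatible ``two extra normals'' directions, which is where the cusp-versus-fold description of $\Pi$ from the preceding paragraph of the excerpt is needed), and that near a transverse self-crossing of the caustic the four local sectors have $n$-values that differ by $2$ across each of the two crossing branches, so that going around the crossing point the total increment is zero — consistent with both $n$ and $i$ being well-defined. I would handle this by a purely local normal-form computation: near a fold point use the standard model $\Pi(u,v)=(u^2,v)$ (the image being $\{x=0, x\leq 0\}$-type half-line with the obvious two-sheet covering on one side), and near a cusp use $\Pi(u,v)=(u^3+uv, v)$, in both cases reading off the jump in the cardinality of the fibre and comparing it with the jump in the winding number about the image curve; the convexity hypothesis guarantees these are the only local models and that the co-orientations glue. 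Once the local picture is pinned down, parts (1) and (2) are immediate from the global degree argument sketched above.
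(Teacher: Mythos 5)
Your argument is correct and is essentially the paper's own proof: the paper likewise defines $i(Q)$ by counting signed intersections of a generic ray with the co-oriented caustic, notes $n(Q)=2$, $i(Q)=0$ on the unbounded component, and observes that each crossing changes $n$ by $\pm 2$ exactly when $i$ changes by $\pm 1$. You simply carry out the same bookkeeping more carefully (local normal forms at folds and cusps, and the explicit deduction of part (2) from $n\geq 2$), which the paper leaves implicit.
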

\begin{proof}
The index of point with respect to a closed curve can be computed by
taking a generic half-ray starting at that point and counting the
intersection points with the curve with a sign. Outside a compact
area $i(Q) = 0$, while $n(Q) = 2$. For each intersection point of
the ray with the caustic, the number of normals changes by
 $+2$  (respectively,  $-2$) according to the change $+1$ (respectively,  $-1$) of the index.
\end{proof}

From the above discussion and Lemma \ref{core} we get a characterization of the set of orthotricenters.

\begin{prp}\label{cau3pod}
 The set of orthotricenters is a compact subset of the interior region of the caustic and coincides with the core of the caustic.
\end{prp}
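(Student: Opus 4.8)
The plan is to characterize an orthotricentre as a point $Q$ from which at least three concurrent normals emanate, and then translate this into a statement about $n(Q)$. First I would recall that a triple $\{P_1,P_2,P_3\}$ is an orthotripod with orthotricentre $Q$ precisely when all three normals at the $P_i$ pass through $Q$ (this is Definition \ref{d:ortho3pod}, the content of Proposition \ref{p:rank2}). Hence $Q$ is an orthotricentre if and only if $Q$ lies on at least three distinct normals of $X$, i.e.\ if and only if $n(Q)\ge 3$. Since the curve $X$ is convex with nowhere-vanishing curvature, $n(Q)$ is even whenever $Q$ is off the caustic, so $n(Q)\ge 3$ forces $n(Q)\ge 4$ there; the set of orthotricentres off the caustic is therefore exactly $\{Q : n(Q)\ge 4\}$, whose closure is by definition the core of the caustic.

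Next I would handle the points lying on the caustic itself. On a fold point of the caustic two of the emerging normals coincide (the critical direction), so a generic caustic point has exactly $3$ (counted without multiplicity, $4$ with multiplicity) normals; cusps have fewer. In any case, a point on the caustic with $n(Q)\ge 3$ is a limit of nearby points off the caustic with $n(\cdot)\ge 4$ — one passes from such a caustic point across the fold into the region with two more normals — so it lies in the core as well. Conversely every point of the core satisfies $n(Q)\ge 4$ on the open dense part, hence by continuity of the preimage count from inside is an accumulation point of orthotricentres, so the closure of the set of orthotricentres is exactly the core. Combining the two directions, the set of orthotricentres and the core of the caustic have the same closure; and since the core is closed, the set of orthotricentres is a subset of it, while the core is contained in the closure of the orthotricentres — so I would be slightly careful and state the conclusion as: the set of orthotricentres is dense in the core, and its closure equals the core. (If one admits that the fold-type caustic points with exactly three normals are themselves orthotricentres, one gets equality on the nose; I would include that remark.)

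Finally, compactness and location: by Lemma \ref{core}(1), $n(Q)=2i(Q)+2$, so $n(Q)\ge 4$ is equivalent to $i(Q)\ne 0$, and by Lemma \ref{core}(2) the closure of $\{i(Q)\ne 0\}$ is exactly the core. Points in the non-compact (unbounded) domain have $i(Q)=0$ and only two normals, so they are never orthotricentres; hence the core is contained in the union of the compact (bounded) domains, i.e.\ in the interior region of the caustic, and being closed and bounded it is compact. This gives at once that the set of orthotricentres is a compact subset of the interior region and that its closure coincides with the core.

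The main obstacle will be the careful bookkeeping at boundary strata of the caustic — fold lines versus cusps — to decide exactly which caustic points count as orthotricentres; everything in the open domains is clean via the parity argument and Lemma \ref{core}, but the precise identification ``set of orthotricentres $=$ core'' rather than merely ``closure of orthotricentres $=$ core'' hinges on the multiplicity conventions for normals at fold points of $C(X)$.
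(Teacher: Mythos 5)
Your proposal is correct and follows essentially the same route the paper intends: the paper gives no explicit proof, merely asserting the result as a consequence of the parity of $n(Q)$ off the caustic and Lemma \ref{core}, which is exactly the argument you spell out. Your extra care at the boundary strata (fold points versus cusps, and the resulting distinction between ``the set of orthotricentres'' and ``its closure'') is a genuine refinement the paper glosses over, and your observation that the honest statement is that the closure of the set of orthotricentres equals the core is well taken.
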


\bigskip

\textbf{Remark.} In the paper of Gounai and Umehara \cite{gounai} is
shown that there are exactly 3 types of caustics with 4 cusps only
(Figure \ref{fig:4cusps}). Inspection of these cases shows that the
core can be different from the internal region.  Lemma \ref{core}
implies that  there are only 2 normals in the ''holes''.

\begin{figure}[htbp]
    \centering
        \includegraphics[width=15 cm]{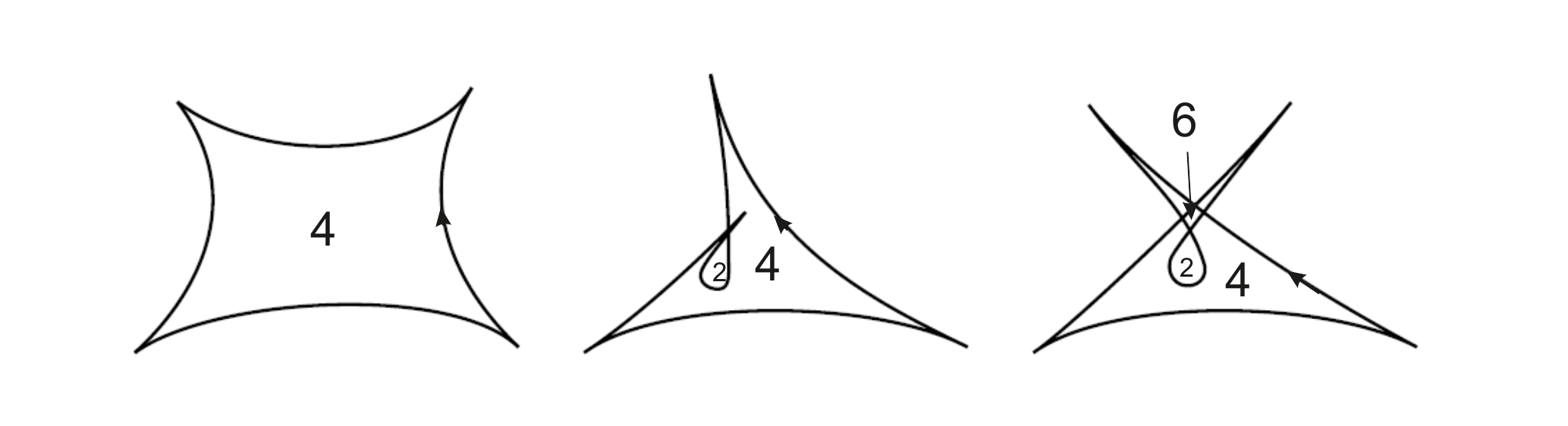}
    \caption{The three 4-vertex caustics  with  orientation and the values $n(Q)$.}
    \label{fig:4cusps}
\end{figure}
\bigskip

Hence for studying orthotripods
 we only need to consider orthotricenters in the core
of caustic.

\vspace{0.5cm}

Next we discuss the equilibria with positive charges. Double normals
to  $X$ provide an extra structure. It is known that each curve has
at least two double normals. There exist curves with infinitely many
double normals (circle  other curves of constant width) but they are
highly non-generic, so here we assume that the number of double
normals is finite.

\begin{lemma}
Assume that an orthotripod changes continuously so that the
orthotricenter  crosses none of double normals. Then the signs of
the balancing charges do not change.
\end{lemma}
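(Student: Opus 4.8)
The plan is to reduce the statement to the vanishing of the scalars $a_{ij}=\langle p_i-p_j,T_i\rangle/d_{ij}^{3}$. First I would record how the balancing charges depend on the $a_{ij}$. Along the deformation each triple is an orthotripod, so by Proposition~\ref{p:rank2} the matrix $(a_{ij})$ has rank $\le 2$; on a convex curve it cannot have rank $1$ (that would force three collinear points), so the rank is exactly $2$, the kernel is a line varying continuously with the parameter, and we get a continuous curve $t\mapsto[q_1(t):q_2(t):q_3(t)]$ in $\mathbb{P}^{2}$. A nonzero kernel vector is the vector of $2\times2$ cofactors of a row of $(a_{ij})$ (equivalently, the cross product of two of its rows); because the diagonal of $(a_{ij})$ vanishes, each such cofactor is, up to sign, a product $a_{jk}a_{lm}$ of two off-diagonal entries. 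Hence $q_m=0$ exactly when one of the relevant $a_{jk}$ vanishes, and — since the charges are only defined up to a common nonzero scalar — the sign pattern $(\operatorname{sign}q_1,\operatorname{sign}q_2,\operatorname{sign}q_3)$, read up to a global sign, is locally constant on the set of parameters where every $a_{ij}\neq 0$, and can change only at a value $t_0$ at which some $a_{ij}$ changes sign.

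The second step is to read "$a_{ij}(t_0)=0$" geometrically. Since $d_{ij}\neq 0$, this says $\langle p_i-p_j,T_i\rangle=0$, i.e. the point $P_j$ lies on the normal line $n_i$ to $X$ at $P_i$. Now invoke the orthotripod condition, which holds throughout: the normals $n_i$ and $n_j$ both pass through the orthotricentre $Q(t_0)$. Thus $n_i$ contains the two points $Q(t_0)$ and $P_j$, and so does $n_j$; provided $Q(t_0)\neq P_j$ these two points are distinct and determine a unique line, forcing $n_i=n_j$. This common line is normal to $X$ at both $P_i$ and $P_j$, so the chord $P_iP_j$ is a double normal of $X$ — and it passes through $Q(t_0)$. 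Therefore a sign change of the balancing charges forces the orthotricentre onto a double normal, and the contrapositive is exactly the lemma: if $Q(t)$ never meets a double normal, no $a_{ij}$ changes sign and the sign pattern stays constant.

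The step I expect to be the real obstacle is the non-degeneracy $Q(t_0)\neq P_j$ used above. If the orthotricentre coincides with one of the feet, the argument $n_i=n_j$ collapses: in that case $(a_{ij})$ acquires a full zero column, the balancing charge degenerates to a single point charge sitting at that foot, and the orthotricentre need not lie on any double normal, so a sign flip could in principle occur without crossing one. This configuration is highly non-generic — it forces a point of $X$ to emit three or more normals (so to lie in the core of the caustic, cf.\ Proposition~\ref{cau3pod}) while simultaneously being a vertex of the orthotripod — so for a transversally (e.g.\ real-analytically) varying family it does not arise, and one may assume $Q(t)\notin X$, hence $Q(t)\neq P_i(t)$, throughout; equivalently one simply restricts to orthotripods whose orthotricentre is distinct from their three points, which is the natural setting. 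Pinning this exclusion down (or checking that it is automatic under the standing hypotheses of nonvanishing curvature and finitely many double normals) is the only delicate point; everything else is the elementary chain above.
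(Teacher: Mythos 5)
Your main chain of reasoning is sound and is essentially an expanded version of the paper's own (two--line) argument: the balancing charge, given by the cofactor vector $[a_{12}a_{23}:a_{13}a_{21}:-a_{12}a_{21}]$ exactly as in Section~\ref{ss:computingcharges}, varies continuously; a change of sign pattern forces some $a_{ij}$ to vanish; and $a_{ij}=0$ combined with concurrency of the normals at the orthotricentre $Q$ forces $n_i=n_j$, i.e.\ a double normal through $Q$. Up to the level of detail, this is the same proof.

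The place where your write-up does not hold together is the disposal of the degenerate case $Q(t_0)=P_j$. You are right to single it out (the paper's own proof silently ignores it), but your claim that it ``does not arise for a transversally varying family'' is incorrect: the locus of orthotripods whose orthotricentre is one of its own feet lies over the arcs $X\cap(\mbox{core})$ and is therefore of codimension one in the two-dimensional space of orthotripods, and a generic path meets a codimension-one set in isolated points rather than avoiding it. Nor is the configuration empty in general: the core of the caustic does meet the curve whenever the caustic protrudes outside $X$ (for an ellipse this already happens when $a^2>2b^2$), so a point $R\in X$ can emit four normals and $\{R,P_i,P_k\}$ is then a genuine orthotripod with orthotricentre $R$. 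At such a configuration the $j$-th column of $(a_{ij})$ vanishes, two of the three charges vanish simultaneously, and a transversal crossing of this locus flips the signs of $q_i$ and $q_k$ relative to $q_j$ while $Q$ meets no double normal. So the exclusion $Q(t)\neq P_j$ is not automatic from the standing hypotheses and cannot be waved away by genericity of the path; it is automatic only when the core of the caustic is disjoint from $X$ (e.g.\ a not-too-eccentric ellipse), and otherwise it must be added as a hypothesis. With that exclusion granted your argument is complete; without it, the gap you flagged is real and your proposed escape does not close it.
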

\begin{proof}  Away from double normals, balancing charges depend
continuously on the orthotripod. If one of the charges changes the
sign, then by  Proposition \ref{poscharges}
 the orthotricenter crosses a double normal. \end{proof}

 Double normals yield a
further partition of the core of the caustic into smaller (open)
regions.
In each of the regions  the three charges form a non-zero triple
with constant signs.

\section{Computing balancing charges} \label{ss:computingcharges}
\subsection{Balancing charges} \label{s:staioncharges}
In case the points $P_1,P_2,P_3$ satisfy the corank 1 condition (i.e. their normals form an orthotripod) we can search for the balancing charges $(q_1,q_2,q_3)$.
 If the rank is exactly two they will define a one-dimensional subspace of $\R^3$.
Since only the ratio is important, we have a point in $\P^2$.
Straightforward computations show that a solution is given by:
$$ [ q_1 : q_2: q_3 ] = [ - \frac{a_{23}}{a_{21}} : - \frac{a_{13}}{a_{12}} : 1] $$
We use the following notations:
$$ A_1  = \frac{a_{13}}{a_{12}} = - \; \frac{d_{12}^2}{d_{13}^2} \frac {\sin \beta_1}{\sin \alpha_1} \; , \;
A_2 = \frac{a_{21}}{a_{23}} = - \; \frac{d_{23}^2}{d_{12}^2} \frac {\sin \beta_2}{\sin \alpha_2} \;, \;
A_3 = \frac{a_{32}}{a_{31}} = - \; \frac{d_{13}^2}{d_{23}^2} \frac {\sin \beta_3}{\sin \alpha_3}.
 $$

Keeping in mind the corank 1 condition $A_1A_2A_3= - 1$ we can write the solution in one of the following forms:
$$ [q_1 : q_2: q_3] =[ A_1A_3 : - A_1 : 1] =[ - A_3 : 1 : A_2A_3] =[ 1: A_1A_2 : - A_2 ] .$$

This formula is for Coulomb forces, in other cases there are similar formulas with different coefficients.

\subsection{Three points on a line}

Let $P_1$, $P_2$ and $P_3$ be located on a line in this order with
distances $d_{12}= a$, $d_{23} = b$, then these points are in
equilibrium if and only if $$[ q_1 : q_2 : q_3 ] = [ \frac{1}{b^2} :
- \frac{1}{(a+b)^2} : \frac{1}{a^2} ] ,$$ that is, every 3 points
can be balanced, but never with positive charges.

\subsection{Three charges on a circle}

Our original paradigm is given by the case of a circle, considered, e.g., in \cite{cohn}. The following can be shown by direct calculations
or as corollaries of Theorem \ref{t:3pod}  and Proposition \ref{poscharges}.
\begin{enumerate}
    \item  Any 3 points on a circle can be  balanced.
    \item Three points on a circle can be balanced with positive charges iff they form an acute triangle.
\end{enumerate}

\subsection{Three charges on a parabola, near to the vertex}
We use a parabola $y= c x^2$ and points $P_1 = (-t, ct^2),
P_2=(0,0), P_3 = (t, ct^2)$. Direct calculations shows that these
points are in equilibrium iff $$[ q_1 : q_2 : q_3 ] = [ 1 : -
\frac{1}{4}  \frac{(1+ 2c^2t^2)^{\frac{3}{2}}}{1+c^2t^2} : 1 ].$$
Notice that if $t \rightarrow 0$ we get $[1: - \frac{1}{4} ; 1 ]$,
which is in accordance with
  with  3 points on the line. In fact
near to the vertex (on any curve)  we cannot have an equilibrium
with only positive charges.

\subsection{Three charges on an ellipse}

Several results in this case have been obtained in \cite{agheriva}.
However some formulations in \cite{agheriva} are not completely rigorous
and one of the aims of our research was to clarify several issues
discussed in \cite{agheriva} in full generality.
Due to our Theorem \ref{t:3pod} and  Propositions \ref{poscharges} and \ref{cau3pod} we obtain mathematically rigorous statements in the case of three points on the ellipse:
\begin{enumerate}
    \item Equilibria exist as soon as the normals are concurrent.
    \item For each point in the core of the caustic there are exactly four concurrent normals, which give rise to four orthotripods. On the caustic curve itself we have only one orthotripod.
    \item In general two out of four of these orthotripods correspond to equilibria with positive charges. On the double normals one has at least one charge zero.
\end{enumerate}

%%%%%%%%%%%%%%%%%%%%%%%%%%%%%%%%%%%%%%%%%%%%%%%%%%%%%%%%%%%%%%%%%%%%%%%%%%%%%%%%%%%%%%%%%%%%%%%%%%%%%%%%%%%%
\section{Topology of orthotripods on an ellipse}

If an ellipse is a circle, then the space
of orthotripods is the symmetric cube of $S^1$  with the fat diagonal deleted.

Assume that the ellipse is not a circle. Then it has a caustic with
4 cusps and no double points. We define $\overline{T}$ as the
closure of the space of orthotripods $T$  taken in the space of all
unordered triples of points on the curve (the symmetric cube of
$S^1$). Let $Y$ be the core of the caustic. In the case of the
ellipse this is a topological disc bounded by 4 intervals, meeting
in 4 cusps.
 There is a projection
$$\overline{T} \rightarrow Y,$$
sending each orthotripod to its orthotricenter.

Over the interior points of $Y$ we have a 4-sheeted covering, and
$\overline{T}$  is a patch of  these 4 sheets along some of the
boundary edges. The rules for that are related to the fold
singularities along the edges. Their fold lines separate exactly two
of the sheets.

If $Q$ is an interior point of $Y$, we just number the perpendiculars
counterclockwise, and then extend the numbering to all of $Y$.
    Sheets are now denoted (with the numbers of the perpendiculars) by $Y_{123}, Y_{234}, Y_{341}, Y_{412}$ (no ordering involved).
    Next we label the cusp and the edges of the caustic with the labels of the coinciding normals:
    \begin{enumerate}
        \item $(1,2)$ means that on that edge the normals $1$ and $2$ coincide;
        \item the cusp $(1,2,3)$ has the property that the normals with labels $1,2,$ and $3$ coincide.
    \end{enumerate}

The gluing rule for edges   is now as follows. The edge $(1,2)$ is
used to glue the corresponding edges of $Y_{123}$ and  $Y_{412}$. On
the sheets $Y_{234}, Y_{341}$  there is no identification, the edge
corresponding to $(1,2)$ will survive as boundary. For the other
edges we have similar behavior. Around cusp points there are 3
sheets involved and the folding changes from one pair to some other
pair of sheets. See the Figures \ref{fig:patching1} and
\ref{fig:patching2} for the details about the gluing.

\begin{figure}[htbp]
\centering
        \includegraphics[width= 13 cm]{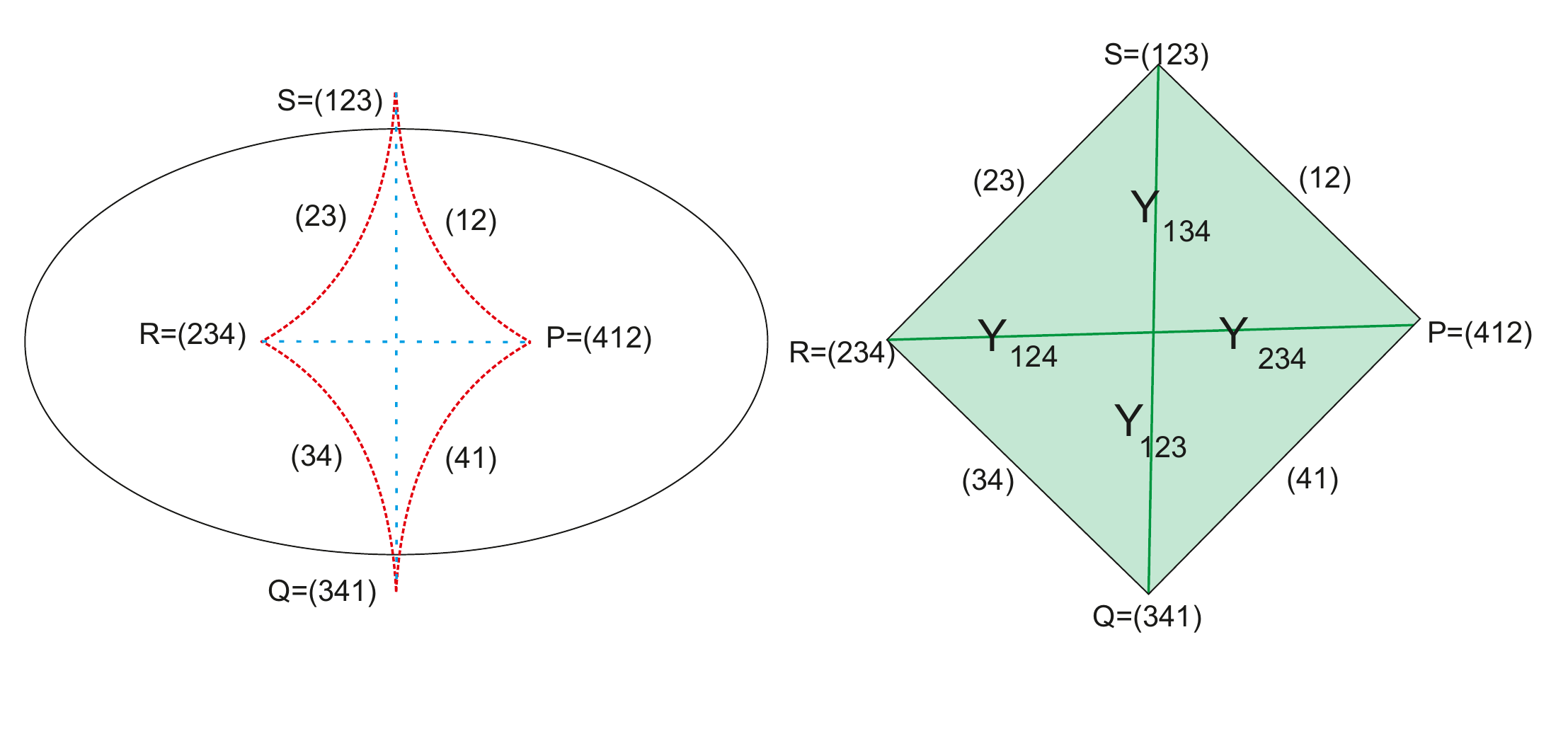}
\vspace{-0.5cm}
    \caption{The ellipse and its caustic (left); Indications of regions with positive charges (right)}
    \label{fig:patching1}
\end{figure}

\begin{figure}[htbp]
\centering
        \includegraphics[width= 10.5 cm]{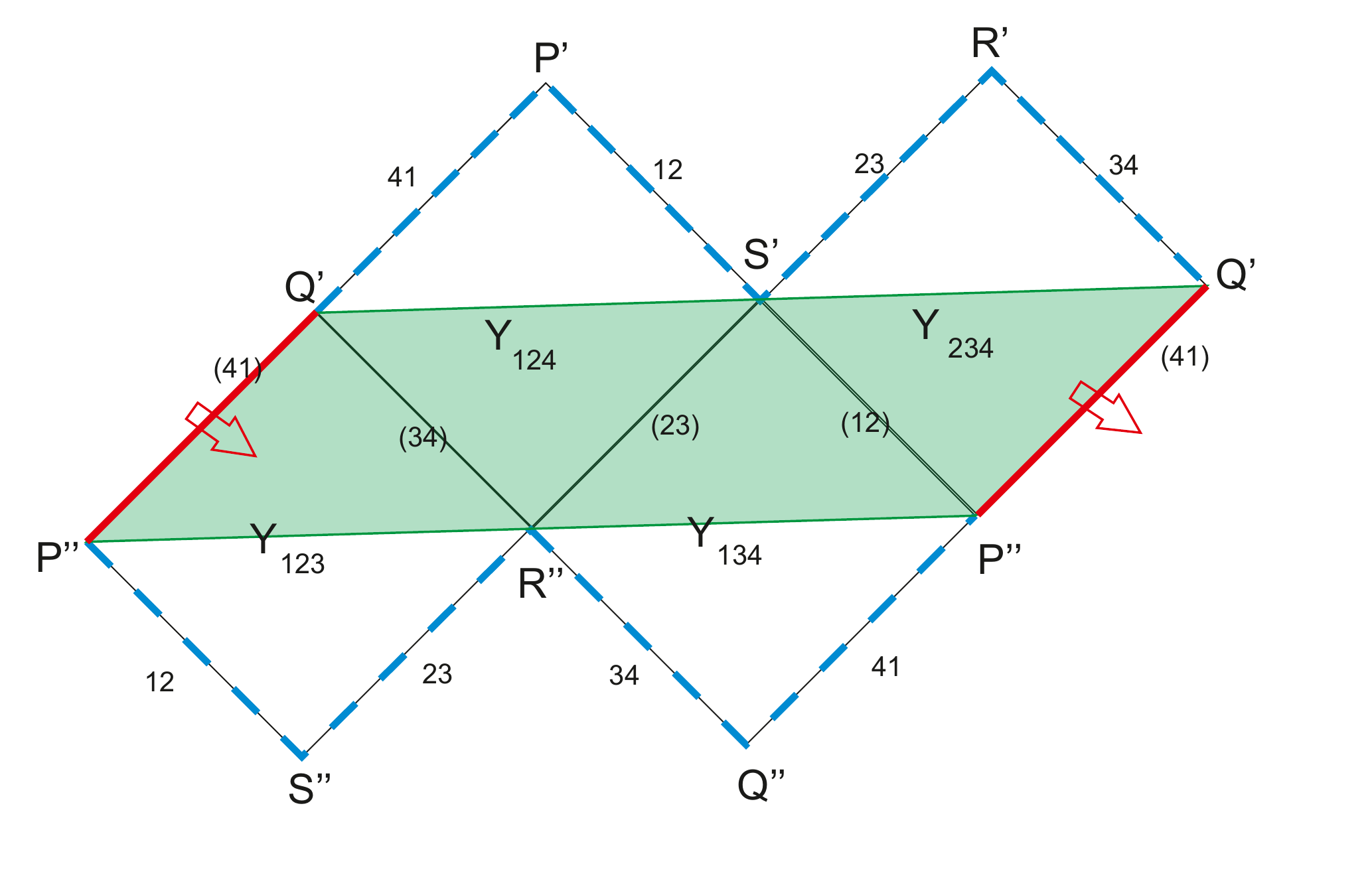}
    \caption{The cylinder of orthotripods on the ellipse. The blue dashed lines represent the boundary of the cylinder. The two thick red segments patch together.
    The shaded area represents the triples that can be balanced by positive charges.}
    \label{fig:patching2}
\end{figure}
We are now in a position to establish the concluding result.

\begin{prp}\label{p:top3Pellipse}
Assume that we have an ellipse, which is not a circle.
\begin{enumerate}
    \item  The closure of the
space of orthotripods is homeomorphic to a cylinder (with two
boundary circles).
    \item  The space of orthotripods which can be
balanced by positive charges only is homeomorphic to  a cylinder.
\end{enumerate}
\end{prp}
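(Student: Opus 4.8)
The plan is to realize $\overline{T}$ explicitly as a quotient of four disjoint copies of the core disc $Y$, glued along their boundary edges according to the fold structure, and to verify directly that the resulting CW-complex is a cylinder. The covering $\overline{T}\to Y$ restricted to the interior of $Y$ is a trivial $4$-sheeted cover (the four normals through an interior orthotricentre are unordered but can be numbered locally and the numbering extends globally over the simply-connected interior), so I would start from $Y_{123}\sqcup Y_{234}\sqcup Y_{341}\sqcup Y_{412}$, each a closed disc whose boundary consists of four arcs and four cusp vertices. The content to be checked is the gluing rule: along the fold-edge labelled $(i,j)$ the two extra normals appearing when one crosses into the core are precisely normals $i$ and $j$, so a loop in $Y$ crossing that edge swaps sheets $Y_{ij k}\leftrightarrow Y_{ij \ell}$ (with $\{k,\ell\}$ the complement of $\{i,j\}$) and fixes the other two; hence the boundary arc $(i,j)$ identifies the corresponding arcs of $Y_{123}$ and $Y_{412}$ (for $(1,2)$) while leaving the $(1,2)$-arcs of $Y_{234},Y_{341}$ free, and symmetrically for the arcs $(2,3),(3,4),(4,1)$. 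At each cusp vertex $(i,j,k)$ three sheets come together and the pair of identified sheets changes from $(i,j)$-type to $(j,k)$-type; I would record the local picture there (three discs meeting along two pairs of edges at a single point) and check it is a manifold-with-boundary point.

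Next I would assemble the global quotient. Label the four boundary edges of $Y$ cyclically as $(1,2),(2,3),(3,4),(4,1)$. The edge $(1,2)$ glues $Y_{123}$ to $Y_{412}$; $(2,3)$ glues $Y_{234}$ to $Y_{123}$; $(3,4)$ glues $Y_{341}$ to $Y_{234}$; $(4,1)$ glues $Y_{412}$ to $Y_{341}$. Thus the four discs are glued in a cyclic chain $Y_{123}-Y_{234}-Y_{341}-Y_{412}-Y_{123}$, each consecutive pair sharing exactly one edge; this is combinatorially the same as gluing four rectangles cyclically edge-to-edge, which produces an annulus. The two boundary circles of the annulus are the union of the remaining free arcs: one circle is assembled from the unglued $(1,2)$-arc of $Y_{234}$, the $(2,3)$-arc of $Y_{341}$, the $(3,4)$-arc of $Y_{412}$, the $(4,1)$-arc of $Y_{123}$ (together with the cusp vertices joining them), and the other circle from the complementary free arcs; one should check these arcs do chain up into exactly two circles and not one, which is where the cusp-vertex bookkeeping matters. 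This establishes part (1): $\overline{T}$ is homeomorphic to a cylinder $S^1\times[0,1]$.

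For part (2) I would use Proposition~\ref{poscharges}: a sheet over a given region has all balancing charges positive exactly when, for the corresponding numbering of normals, all three normals at $P_1,P_2,P_3$ enter the interior of $\triangle P_1P_2P_3$. Crossing a double normal is precisely where one of the $a_{ij}$, and hence a balancing charge, changes sign (the sign-change lemma preceding this statement), so the double normals cut the core $Y$, and with it each of the four sheets, into subregions of constant charge-signs. I would identify, region by region on $Y$ (using the partition of $Y$ by the two double normals of a generic ellipse and the symmetry of the ellipse), which of the four sheets over each subregion carries the all-positive sign pattern; the claim to verify is that these "positive" pieces of the four sheets, together with their boundary identifications inherited from the gluing above, again form a connected cylinder (the shaded region in Figure~\ref{fig:patching2}). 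Concretely I expect the positive locus to be a sub-band of the cylinder bounded by arcs lying on the double-normal preimages and on the caustic, and I would check it is an annulus by the same edge-counting as before.

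The main obstacle is the cusp-vertex analysis: away from cusps everything is a routine verification that four discs glued cyclically along single edges form an annulus, but at each of the four cusps three sheets meet and the fold changes which pair is identified, so one must argue carefully that the quotient is still locally Euclidean-with-boundary there (equivalently, that the link of the cusp in $\overline{T}$ is an arc, not a more complicated graph) — this is exactly the content hidden in Figures~\ref{fig:patching1}–\ref{fig:patching2}, and getting the combinatorics of the three incident edge-pairs right is what makes the difference between an annulus and, say, a pinched surface or a Möbius band. A secondary point requiring care in part (2) is determining the sign pattern on each sheet over each double-normal subregion, for which I would exploit the reflective symmetries of the ellipse to reduce the number of cases.
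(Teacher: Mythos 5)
Your proposal is correct and follows essentially the same route as the paper: realize $\overline{T}$ as four copies of the core $Y$ glued cyclically along the fold edges of the caustic to obtain an annulus, and then cut out the positive-charge sub-annulus using Proposition \ref{poscharges} together with the partition of the core by double normals. You spell out more of the combinatorics (the cyclic chain of four discs, the boundary-circle count, the cusp-link check) than the paper's proof, which delegates exactly these points to the gluing rules stated before the proposition and to Figures \ref{fig:patching1}--\ref{fig:patching2}, but the decomposition and the key lemmas are the same.
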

\begin{proof}
As we explained above, the  closure of the space of orthotripods is a patch of four copies
of the core of the caustic.  This gives us a cylinder. On each of the copies we shade green the part that corresponds to positive charges,
which gives us a smaller cylinder (see Figure \ref{fig:patching2}).

\end{proof}

\section{Concluding remarks and questions}

As we can see from Figure \ref{fig:convexcause3}, the combinatorics
of a caustics can vary a lot.  An intriguing question is to
characterize the space of orthotripods in full generality, not just
for ellipse (for which we have the simplest possible caustic).
Evidently, similar patching rules hold for any caustic, but can lead
to a more complicated topology of the space of orthotripods. Natural
questions are: is this space a surface with boundary? What is its
Euler characteristic? What is its genus, number of boundary
components?

Another issue which remains untouched in this paper is the question
of stability of an equilibrium. It seems that here more delicate
characteristics of the curve are involved: not just  the
combinatorics of tangent lines and normals, but also the curvature
of the curve.

One more  interesting question could be for fixed charges
$[q_1:q_2:q_3]$, to relate  the number  of orthotripods balanced by
these charges to the properties of the curve.

\bigskip

\textbf{Acknowledgements.} The present paper was written during a
``Research in Pairs'' session in CIRM (Luminy) in January of 2015. The
authors acknowledge the hospitality and excellent working conditions
at CIRM
%%%%%%%%%%%%%%%%%%%%%%%%%%%%%%%%%%%%%%%%%%%%%%%%%%%%%%%%%%%%%%%%%%%%%%%%%%%%%%%%%%%%%%%%%%%%%%%%%%%%%%%%%%

\end{document}